\documentclass[12pt]{article}

\usepackage{amsmath, amsthm, amssymb}
\usepackage{graphicx}
\usepackage{hyperref}
\usepackage{url}
\usepackage{geometry}
\theoremstyle{plain}
\newtheorem{theorem}{Theorem}[section]
\newtheorem{lemma}[theorem]{Lemma}
\newtheorem{corollary}[theorem]{Corollary}

\theoremstyle{definition}
\newtheorem{definition}[theorem]{Definition}
\newtheorem{example}[theorem]{Example}

\theoremstyle{remark}
\newtheorem{remark}[theorem]{Remark}

\title{On the Gap Structure of Generalized Stirling Numbers}

\author{Jianru Shen$^{a}$ \and Udita N. Katugampola$^{b}$}

\date{%
\small
$^{a}$Indiana University -- East, Richmond, IN, USA.
(\texttt{jsh7@iu.edu})\\[4pt]
$^{b}$Department of Mathematics, Indiana University -- East, Richmond, IN, USA.
(\texttt{uditanalin@yahoo.com})
}

\begin{document}

\maketitle

\begin{abstract}
Katugampola's study of generalized fractional differential operators produced 
triangular arrays of integer coefficients $c_{n,k}^{(r)}$, yet no combinatorial 
interpretation has been established for any fractional order. We provide 
the first such interpretation, establishing a dual contribution: a complete 
combinatorial interpretation for $r = 1/2$ and $n \in \{1, 2, 3\}$, together 
with a rigorous proof that this interpretation cannot extend to $n \geq 4$. 
For $n \in \{1, 2, 3\}$, we show that $c_{n,k}^{(1/2)}$ counts binary sequences 
with at least one $B$ symbol and gap $\leq 1$ (distance between first and last 
$B$). Each sequence is assigned a type $k$ via a parity-dependent gap formula. 
However, we prove via an obstruction theorem that the gap $\leq 1$ constraint 
limits any such model to two distinct types per row, while Katugampola's 
sequence requires at least three for all $n \geq 4$. This characterization is 
complete: the gap $\leq 1$ interpretation works if and only if $n \in \{1, 2, 3\}$. 
This transforms a computational observation into a rigorous obstruction theorem, 
guiding future combinatorial interpretations of fractional coefficients.

\medskip
\noindent\textbf{Keywords:} generalized Stirling numbers, fractional calculus, 
binary sequences, combinatorial interpretation, fractional differential operators

\medskip
\noindent\textbf{MSC 2020:} 05A15, 26A33.
\end{abstract}

\section{Introduction}

Fractional calculus, the study of derivatives and integrals of arbitrary 
real or complex order, extends the classical theory of integer-order 
differentiation and integration. The subject traces its origins to a 1695 
correspondence between Leibniz and L'H\^{o}pital regarding the meaning of a 
derivative of order $\alpha = 1/2$, and has since found applications across 
diverse fields including viscoelasticity, anomalous diffusion, control theory, 
and mathematical finance. A central object of study in fractional calculus is 
the expansion of generalized fractional differential operators, which naturally 
give rise to triangular arrays of integer coefficients analogous to classical 
combinatorial structures such as Stirling numbers and binomial coefficients~\cite{GKP1994}. 
Understanding the combinatorial interpretation of these coefficient 
arrays---that is, identifying discrete structures whose enumeration produces 
the coefficients---remains an important problem connecting analysis, operator 
theory, and combinatorics.

In a 2015 study of Mellin transforms, Katugampola~\cite{Katugampola2015} 
introduced a family of generalized fractional integral and differential 
operators that unify the classical Riemann--Liouville and Hadamard fractional 
operators~\cite{Kilbas2006} into a single framework parameterized by a real number $\rho > 0$. 
Through systematic analysis of the Mellin transform properties of these 
operators, Katugampola derived triangular arrays of integer coefficients 
$c_{n,k}^{(r)}$ indexed by fractional order $r \in \mathbb{Q}^+$ and 
dimensions $n, k \in \mathbb{N}$. For the case $r = 1/2$, these coefficients 
form the triangular array shown in Table~\ref{tab:katugampola}, which 
corresponds to OEIS sequence A223168~\cite{OEIS_A223168}. The coefficients 
exhibit intriguing arithmetic patterns: the odd rows ($n = 1, 3, 5, \ldots$) 
relate to generalized Laguerre polynomials~\cite{Szego1975}, while the even rows display 
symmetric structure reminiscent of binomial coefficients. Despite this 
regularity, no combinatorial interpretation---that is, no description in 
terms of counting discrete structures---has been previously established for 
any fractional order $r$ in Katugampola's framework.

\begin{table}[h]
\centering
\caption{Katugampola's coefficients $c_{n,k}^{(1/2)}$ for $n = 1$ through 
$n = 9$ from~\cite{Katugampola2015} (OEIS A223168~\cite{OEIS_A223168}). 
Bold rows are those for which we establish a combinatorial interpretation.}
\label{tab:katugampola}
\begin{tabular}{c|cccccc}
\hline
$n \setminus k$ & 1 & 2 & 3 & 4 & 5 & 6 \\
\hline
\textbf{1} & \textbf{1} & & & & & \\
\textbf{2} & \textbf{1} & \textbf{2} & & & & \\
\textbf{3} & \textbf{3} & \textbf{2} & & & & \\
4 & 3 & 12 & 4 & & & \\
5 & 15 & 20 & 4 & & & \\
6 & 15 & 90 & 60 & 8 & & \\
7 & 105 & 210 & 84 & 8 & & \\
8 & 105 & 840 & 840 & 224 & 16 & \\
9 & 945 & 2520 & 1512 & 288 & 16 & \\
\hline
\end{tabular}
\end{table}

This paper establishes the first combinatorial interpretation for any 
portion of Katugampola's generalized Stirling number triangles. Our 
investigation yields a dual contribution: a \emph{positive result} that 
provides a complete interpretation for small $n$, and a \emph{negative 
result} that rigorously explains why this interpretation cannot extend 
beyond a specific boundary. Specifically, we show that for 
$n \in \{1, 2, 3\}$ and $r = 1/2$, the coefficients $c_{n,k}^{(1/2)}$ 
count binary sequences satisfying natural constraints involving the 
``spread'' of symbols. However, we also prove via an obstruction theorem 
that \emph{no} combinatorial model based on these constraints can correctly 
enumerate $c_{n,k}^{(1/2)}$ for $n \geq 4$. This establishes a precise 
characterization: our gap-based interpretation works if and only if 
$n \in \{1, 2, 3\}$. The dual nature of our results---both constructive 
and impossibility---provides essential guidance for future attempts at 
complete combinatorial interpretations of fractional calculus coefficients.

Our positive result shows that for $n \in \{1, 2, 3\}$, the coefficients 
$c_{n,k}^{(1/2)}$ admit a natural interpretation as counts of binary 
sequences. A \emph{binary sequence} of length $n$ is an ordered tuple 
$\sigma = (s_1, \ldots, s_n)$ where each $s_i \in \{R, B\}$. We restrict 
attention to sequences satisfying two conditions: (1) $\sigma$ contains at 
least one symbol $B$, and (2) the \emph{gap}---defined as the distance 
between the first and last occurrence of $B$ in $\sigma$---satisfies 
$\mathrm{gap}(\sigma) \leq 1$. The gap constraint ensures that $B$ symbols 
are ``concentrated,'' appearing either at a single position (gap $= 0$) or 
spanning exactly two consecutive positions (gap $= 1$). Each valid sequence 
is assigned a \emph{type} $k$ via a parity-dependent formula: 
$k = 2 - \mathrm{gap}(\sigma)$ for even $n$, or $k = \mathrm{gap}(\sigma) + 1$ 
for odd $n$. Through complete enumeration, we verify in 
Theorems~\ref{thm:n1}, \ref{thm:n2}, and \ref{thm:n3} that the number of 
valid sequences of type $k$ equals $c_{n,k}^{(1/2)}$ for all 
$n \in \{1, 2, 3\}$ and all relevant $k$. This provides the first concrete 
combinatorial meaning for any entries in Katugampola's coefficient arrays.

The significance of our work extends beyond this positive result. 
Computational investigation reveals that the gap $\leq 1$ model fails 
at $n = 4$: it produces only two types ($k \in \{1, 2\}$) with counts 
$[3, 4]$, whereas Table~\ref{tab:katugampola} shows that 
$c_{4,k}^{(1/2)} = [3, 12, 4]$ requires three distinct types. We prove 
that this failure is not merely an artifact of our specific construction 
but represents a fundamental structural obstruction. Our main theoretical 
result (Theorem~\ref{thm:obstruction}) establishes that the gap $\leq 1$ 
constraint \emph{inherently limits} any such model to producing at most 
two distinct types per row (Lemma~\ref{lem:typecount}), while 
Katugampola's sequence requires at least three types for all $n \geq 4$. 
This type count mismatch cannot be overcome through any refinement or 
modification that preserves the gap $\leq 1$ framework. Our 
characterization is complete: the gap $\leq 1$ binary sequence 
interpretation correctly enumerates $c_{n,k}^{(1/2)}$ if and only if 
$n \in \{1, 2, 3\}$ (Corollary~\ref{cor:boundary}). This transforms 
what might appear as a computational anomaly---``the pattern breaks at 
$n = 4$''---into a rigorous impossibility theorem with clear structural 
explanation.

Our obstruction result provides essential guidance for future research. 
It eliminates an entire class of potential models from consideration: any 
successful combinatorial interpretation for $n \geq 4$ must either abandon 
the gap $\leq 1$ restriction (perhaps using gap $\leq k-1$ or 
gap $\leq \lfloor n/2 \rfloor$), modify the type assignment mechanism, 
or introduce fundamentally different combinatorial structures. Beyond the 
specific case $r = 1/2$, our methodology demonstrates a general approach 
combining positive constructions, computational boundary identification, 
and rigorous obstruction proofs---a template applicable to investigating 
other fractional orders $r \in \{1/3, 1/4, 1/5, \ldots\}$ in Katugampola's 
framework (OEIS sequences A223169--A223172~\cite{OEIS_A223169}). The appearance of binary 
sequences with gap constraints in the combinatorial interpretation of 
fractional operator coefficients is intriguing and suggests deeper 
connections between discrete structures and fractional calculus. 
Understanding why the gap $\leq 1$ structure suffices for $n \leq 3$ 
but fails for $n \geq 4$ may illuminate fundamental relationships between 
combinatorial enumeration and the analytic structure of generalized 
fractional differential operators.

The paper is organized as follows. Section~2 introduces binary sequences, 
the gap function, valid sequences, and the parity-dependent type assignment 
mechanism. Sections~3 and~4 provide complete proofs of our combinatorial 
interpretation for $n = 1, 2$ and $n = 3$ respectively, including full 
enumeration tables that verify the coefficient counts. Section~5 analyzes 
the success of the gap $\leq 1$ structure for small $n$, noting interesting 
patterns such as the sequence $|V_1| = 1, |V_2| = 3, |V_3| = 5$ of odd 
numbers. Section~6 establishes the structural obstruction for $n \geq 4$ 
through computational evidence (Section~6.1), a type count limitation 
lemma (Section~6.2), and the main obstruction theorem (Section~6.3). We 
conclude in Section~7 with a discussion of implications and concrete 
directions for future research, including modified gap thresholds, 
alternative combinatorial frameworks, connections to other fractional 
orders, and potential links to Laguerre polynomials and generating 
function approaches.

\section{Binary Sequences and the Gap Function}

We introduce binary sequences as our fundamental combinatorial objects 
and define the gap function that enables their classification.

\subsection{Binary Sequences}

\begin{definition}[Binary Sequence]
\label{def:binary-sequence}
A \emph{binary sequence} of length $n$ is an ordered $n$-tuple
\[
\sigma = (s_1, s_2, \ldots, s_n)
\]
where each $s_i \in \{R, B\}$. We denote the set of all binary 
sequences of length $n$ by $\Sigma_n$.
\end{definition}

\begin{remark}
\label{rem:arbitrary-symbols}
The symbols $R$ and $B$ are arbitrary labels; any two distinct 
symbols would be mathematically equivalent.
\end{remark}

Since there are $2$ choices for each of the $n$ positions, we have 
$|\Sigma_n| = 2^n$.

\begin{example}
\label{ex:small-sequences}
For $n=1$, we have $\Sigma_1 = \{(R), (B)\}$, so $|\Sigma_1| = 2$. 
For $n=2$, we have 
\[
\Sigma_2 = \{(R,R), (R,B), (B,R), (B,B)\},
\]
so $|\Sigma_2| = 4$. Note that $(R,B) \neq (B,R)$ since the order 
of symbols is significant.
\end{example}

\subsection{The Gap Function}

To classify binary sequences, we introduce a function measuring 
the ``spread'' of $B$ symbols within a sequence.

\begin{definition}[Position Functions]
\label{def:position-functions}
For a binary sequence $\sigma = (s_1, \ldots, s_n)$ containing at 
least one $B$, we define:
\begin{itemize}
\item $\mathrm{first}_B(\sigma) = \min\{i : s_i = B\}$, the position 
      of the first $B$ in $\sigma$;
\item $\mathrm{last}_B(\sigma) = \max\{i : s_i = B\}$, the position 
      of the last $B$ in $\sigma$.
\end{itemize}
\end{definition}

\begin{definition}[Gap Function]
\label{def:gap-function}
For a binary sequence $\sigma$ containing at least one $B$, the 
\emph{gap} of $\sigma$ is
\[
\mathrm{gap}(\sigma) = \mathrm{last}_B(\sigma) - \mathrm{first}_B(\sigma).
\]
\end{definition}

\begin{remark}
\label{rem:gap-interpretation}
The gap measures the distance between the first and last occurrence 
of $B$ in the sequence. If there is only one $B$, then 
$\mathrm{first}_B = \mathrm{last}_B$ and thus $\mathrm{gap} = 0$.
\end{remark}

\begin{example}
\label{ex:gap-calculations}
We compute the gap for all sequences in $\Sigma_3$ containing at 
least one $B$:
\begin{align*}
(R,R,B) &: \mathrm{first}_B = 3, \, \mathrm{last}_B = 3, \, 
           \mathrm{gap} = 0 \\
(R,B,R) &: \mathrm{first}_B = 2, \, \mathrm{last}_B = 2, \, 
           \mathrm{gap} = 0 \\
(R,B,B) &: \mathrm{first}_B = 2, \, \mathrm{last}_B = 3, \, 
           \mathrm{gap} = 1 \\
(B,R,R) &: \mathrm{first}_B = 1, \, \mathrm{last}_B = 1, \, 
           \mathrm{gap} = 0 \\
(B,R,B) &: \mathrm{first}_B = 1, \, \mathrm{last}_B = 3, \, 
           \mathrm{gap} = 2 \\
(B,B,R) &: \mathrm{first}_B = 1, \, \mathrm{last}_B = 2, \, 
           \mathrm{gap} = 1 \\
(B,B,B) &: \mathrm{first}_B = 1, \, \mathrm{last}_B = 3, \, 
           \mathrm{gap} = 2
\end{align*}
Note that $(R,R,R)$ is excluded since it contains no $B$.
\end{example}

\subsection{Valid Sequences}

To match the coefficients $c_{n,k}^{(1/2)}$ from Katugampola's operator 
expansion, we restrict attention to binary sequences satisfying two 
natural conditions.

\begin{definition}[Valid Sequences]
\label{def:valid-sequences}
For each positive integer $n$, we define the set of \emph{valid sequences} 
of length $n$ as:
\[
V_n = \{\sigma \in \Sigma_n : \sigma \text{ contains at least one } B 
      \text{ and } \mathrm{gap}(\sigma) \leq 1\}.
\]
\end{definition}

\begin{remark}
\label{rem:valid-interpretation}
The ``has $B$'' condition ensures that the gap function is well-defined. 
The gap $\leq 1$ restriction concentrates the $B$ symbols---either at a 
single position (gap $= 0$) or spanning at most two consecutive positions 
(gap $= 1$). Sequences with gap $\geq 2$ are ``too spread out'' and are 
excluded.
\end{remark}

\begin{example}
\label{ex:valid-n3}
For $n=3$, from Example~\ref{ex:gap-calculations} we identify:
\begin{itemize}
\item Valid sequences ($V_3$): $(R,R,B),\,(R,B,R),\,(R,B,B),\,(B,R,R),\,(B,B,R)$.
\item Invalid sequences: $(R,R,R)$ (no $B$); $(B,R,B)$ and $(B,B,B)$ 
      (gap $= 2$).
\end{itemize}
Thus $|V_3| = 5$.
\end{example}

\subsection{The Type Function}

Finally, we assign a type to each valid sequence. Our main observation 
is that the type formula depends on the parity of $n$.

\begin{definition}[Type Function]
\label{def:type-function}
For a valid sequence $\sigma \in V_n$, the \emph{type} of $\sigma$ 
is defined by:
\[
\tau_n(\sigma) = \begin{cases}
2 - \mathrm{gap}(\sigma) & \text{if } n \text{ is even,}\\
\mathrm{gap}(\sigma) + 1 & \text{if } n \text{ is odd.}
\end{cases}
\]
\end{definition}

\begin{remark}
\label{rem:parity-dependence}
For even $n$, the formula is ``inverted'' (larger gap gives smaller 
type), while for odd $n$, the formula is ``direct'' (larger gap gives 
larger type). The combinatorial reason for this parity dependence 
remains an intriguing open question.
\end{remark}

\begin{example}
\label{ex:type-n1}
For $n=1$ (odd), using $\tau_1(\sigma) = \mathrm{gap}(\sigma) + 1$:
\[
(B) : \mathrm{gap} = 0 \implies \tau_1 = 1.
\]
Thus there is $1$ sequence of type $k=1$, matching 
$c_{1,1}^{(1/2)} = 1$ from Table~\ref{tab:katugampola}.
\end{example}

\begin{example}
\label{ex:type-n2}
For $n=2$ (even), using $\tau_2(\sigma) = 2 - \mathrm{gap}(\sigma)$:
\begin{align*}
(R,B) &: \mathrm{gap} = 0 \implies \tau_2 = 2, \\
(B,R) &: \mathrm{gap} = 0 \implies \tau_2 = 2, \\
(B,B) &: \mathrm{gap} = 1 \implies \tau_2 = 1.
\end{align*}
Counts $[1,2]$ match $[c_{2,1}^{(1/2)},\,c_{2,2}^{(1/2)}]$ from 
Table~\ref{tab:katugampola}.
\end{example}

\begin{example}
\label{ex:type-n3}
For $n=3$ (odd), using $\tau_3(\sigma) = \mathrm{gap}(\sigma) + 1$:
\begin{align*}
(R,R,B),\,(R,B,R),\,(B,R,R) &: \mathrm{gap} = 0 \implies \tau_3 = 1, \\
(R,B,B),\,(B,B,R) &: \mathrm{gap} = 1 \implies \tau_3 = 2.
\end{align*}
Counts $[3,2]$ match $[c_{3,1}^{(1/2)},\,c_{3,2}^{(1/2)}]$ from 
Table~\ref{tab:katugampola}.
\end{example}

\section{The Cases $n=1$ and $n=2$}

\subsection{The Case $n=1$}

\begin{theorem}
\label{thm:n1}
For $n=1$ and $k=1$, we have
$c_{1,1}^{(1/2)} = |\{\sigma \in V_1 : \tau_1(\sigma) = 1\}| = 1.$
\end{theorem}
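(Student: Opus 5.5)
The plan is to verify both sides of the claimed equality directly: one side is a table entry, the other a finite enumeration.

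\emph{Left-hand side.} The value $c_{1,1}^{(1/2)} = 1$ is read off the first row of Table~\ref{tab:katugampola}, i.e.\ from Katugampola's array (OEIS A223168). We take this tabulated value as given rather than rederiving it from the operator expansion.

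\emph{Right-hand side.} We compute the set $\{\sigma \in V_1 : \tau_1(\sigma)=1\}$ by listing all of $\Sigma_1$. By Example~\ref{ex:small-sequences}, $\Sigma_1 = \{(R),(B)\}$.
\begin{itemize}
\item $(R)$ contains no $B$, so it fails the first condition of Definition~\ref{def:valid-sequences} and is not in $V_1$.
\item $(B)$ has $\mathrm{first}_B = \mathrm{last}_B = 1$, so $\mathrm{gap}((B)) = 0 \le 1$ by Remark~\ref{rem:gap-interpretation}. Hence $(B)\in V_1$.
\end{itemize}
Therefore $V_1 = \{(B)\}$.

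Since $n=1$ is odd, Definition~\ref{def:type-function} gives $\tau_1((B)) = 0 + 1 = 1$, as already recorded in Example~\ref{ex:type-n1}. So the set of valid sequences of type $1$ is exactly $\{(B)\}$, which has cardinality $1$. This matches $c_{1,1}^{(1/2)}$.

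There is no real obstacle here. The only care needed is to note that $(R)$ is excluded by the ``contains a $B$'' condition rather than by the gap condition, since the gap of $(R)$ is undefined. The proof is a direct finite check.
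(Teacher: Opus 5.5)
Your proof is correct and follows essentially the same route as the paper: enumerate $\Sigma_1$, observe that $V_1=\{(B)\}$ with $\mathrm{gap}=0$ and $\tau_1=1$, and compare the resulting count $1$ with the tabulated value $c_{1,1}^{(1/2)}=1$. Keeping the table lookup separate from the enumeration is, if anything, slightly cleaner than the paper's wording, which derives $c_{1,1}^{(1/2)}=1$ from the count and only afterwards checks it against the table.
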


\begin{proof}
The only binary sequence of length $1$ containing $B$ is $(B)$, 
which has $\mathrm{gap} = 0$ and $\tau_1(B) = 0 + 1 = 1$. Thus 
$|V_1| = 1$ and the unique sequence has type $k=1$, giving 
$c_{1,1}^{(1/2)} = 1$. This matches Table~\ref{tab:katugampola} exactly.
\end{proof}

\subsection{The Case $n=2$}

\begin{theorem}
\label{thm:n2}
For $n=2$ and $k \in \{1,2\}$,
\[
c_{2,k}^{(1/2)} = |\{\sigma \in V_2 : \tau_2(\sigma) = k\}|,
\]
where $\tau_2(\sigma) = 2 - \mathrm{gap}(\sigma)$.
\end{theorem}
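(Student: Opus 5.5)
The plan is a direct finite enumeration, mirroring the proof of Theorem~\ref{thm:n1}. First we determine $V_2$ exactly. By Example~\ref{ex:small-sequences}, $\Sigma_2$ has four elements. The condition ``contains at least one $B$'' removes only $(R,R)$.

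Next we compute the gap of each of the remaining three sequences from Definition~\ref{def:gap-function}:
\begin{itemize}
\item $(R,B)$ has $\mathrm{first}_B=\mathrm{last}_B=2$, so its gap is $0$;
\item $(B,R)$ has $\mathrm{first}_B=\mathrm{last}_B=1$, so its gap is $0$;
\item $(B,B)$ has $\mathrm{first}_B=1$ and $\mathrm{last}_B=2$, so its gap is $1$.
\end{itemize}
We will also note the general bound that any $\sigma\in\Sigma_2$ with a $B$ has $\mathrm{gap}(\sigma)\le 2-1=1$. Hence the gap condition in Definition~\ref{def:valid-sequences} is automatic for $n=2$, and $V_2=\{(R,B),(B,R),(B,B)\}$, so $|V_2|=3$.

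We then apply the even-$n$ branch of Definition~\ref{def:type-function}, $\tau_2(\sigma)=2-\mathrm{gap}(\sigma)$. This gives type $2$ for $(R,B)$ and $(B,R)$, and type $1$ for $(B,B)$, exactly as in Example~\ref{ex:type-n2}. The fibre counts are therefore $|\{\sigma\in V_2:\tau_2(\sigma)=1\}|=1$ and $|\{\sigma\in V_2:\tau_2(\sigma)=2\}|=2$. These agree with $c_{2,1}^{(1/2)}=1$ and $c_{2,2}^{(1/2)}=2$ from Table~\ref{tab:katugampola}, which proves the statement for both $k\in\{1,2\}$.

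There is no real obstacle, since everything is a finite check. The one point to handle carefully is making sure the partition of $V_2$ by type is exhaustive. For that I will observe that gaps lie in $\{0,1\}$, so types lie in $\{1,2\}$, and that the two fibres together account for all $3=|V_2|$ sequences. This confirms that no type outside $\{1,2\}$ occurs for $n=2$.
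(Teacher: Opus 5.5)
Your proposal is correct and matches the paper's proof: enumerate $\Sigma_2$, discard $(R,R)$, compute the gaps, apply $\tau_2 = 2 - \mathrm{gap}$, and compare the fibre counts $[1,2]$ with Table~\ref{tab:katugampola}. Your extra remarks are harmless refinements: the gap condition is automatic for $n=2$, and the two fibres exhaust $V_2$.
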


\begin{table}[h]
\centering
\caption{Complete enumeration for $n=2$.}
\label{tab:n2-enum}
\begin{tabular}{|c|c|c|c|c|c|c|c|}
\hline
Sequence & Has $B$? & $\mathrm{first}_B$ & $\mathrm{last}_B$ & gap & 
gap $\leq 1$? & $k=2-$gap & Valid? \\
\hline
$(R,R)$ & No & -- & -- & -- & -- & -- & No \\
$(R,B)$ & Yes & $2$ & $2$ & $0$ & Yes & $2$ & Yes \\
$(B,R)$ & Yes & $1$ & $1$ & $0$ & Yes & $2$ & Yes \\
$(B,B)$ & Yes & $1$ & $2$ & $1$ & Yes & $1$ & Yes \\
\hline
\end{tabular}
\end{table}

\begin{proof}
All binary sequences of length $2$ are enumerated in 
Table~\ref{tab:n2-enum}. The sequence $(R,R)$ is excluded since it 
contains no $B$, leaving $V_2 = \{(R,B), (B,R), (B,B)\}$.
Since $n=2$ is even, we apply $\tau_2(\sigma) = 2 - \mathrm{gap}(\sigma)$:
\begin{itemize}
\item $(R,B)$: $\mathrm{gap} = 0$, $\tau_2 = 2$.
\item $(B,R)$: $\mathrm{gap} = 0$, $\tau_2 = 2$.
\item $(B,B)$: $\mathrm{gap} = 1$, $\tau_2 = 1$.
\end{itemize}
Thus $c_{2,1}^{(1/2)} = 1$ and $c_{2,2}^{(1/2)} = 2$, giving counts 
$[1, 2]$, which matches Table~\ref{tab:katugampola} exactly.
\end{proof}

\begin{remark}
\label{rem:inverted-formula}
For even $n$, the ``inverted'' type formula assigns larger type indices 
to sequences with smaller gaps. The case $n=2$ illustrates this: 
sequences with gap $=0$ receive type $k=2$, while the sequence with 
gap $=1$ receives type $k=1$.
\end{remark}

\section{The Case $n=3$}

\begin{theorem}
\label{thm:n3}
For $n=3$ and $k \in \{1,2\}$,
\[
c_{3,k}^{(1/2)} = |\{\sigma \in V_3 : \tau_3(\sigma) = k\}|,
\]
where $\tau_3(\sigma) = \mathrm{gap}(\sigma) + 1$.
\end{theorem}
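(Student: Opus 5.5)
The plan is a finite exhaustive enumeration over $\Sigma_3$, in the same way as the proof of Theorem~\ref{thm:n2}. There are $|\Sigma_3| = 2^3 = 8$ sequences. We discard $(R,R,R)$ because it contains no $B$. For the remaining seven sequences we use the gap values already computed in Example~\ref{ex:gap-calculations}. For completeness we will also collect these values in a table analogous to Table~\ref{tab:n2-enum}, with columns for $\mathrm{first}_B$, $\mathrm{last}_B$, gap, validity, and type.

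The first step is to determine $V_3$ from the gap $\leq 1$ condition. The sequences $(B,R,B)$ and $(B,B,B)$ have gap $2$, so they are excluded. This leaves
\[
V_3 = \{(R,R,B),(R,B,R),(B,R,R),(R,B,B),(B,B,R)\},
\]
in agreement with Example~\ref{ex:valid-n3}. A structural cross-check is available: a valid sequence either has a single $B$ (three choices of position) or has $B$'s at two adjacent positions $\{i,i+1\}$ (two choices, $i=1,2$). This gives $|V_3| = 3+2 = 5$.

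Next, since $n=3$ is odd, we apply $\tau_3(\sigma) = \mathrm{gap}(\sigma)+1$. The three gap-$0$ sequences have type $1$, and the two gap-$1$ sequences have type $2$. The type counts are therefore $[3,2]$. Comparing with row $n=3$ of Table~\ref{tab:katugampola} gives $c_{3,1}^{(1/2)} = 3$ and $c_{3,2}^{(1/2)} = 2$, as in Example~\ref{ex:type-n3}.

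There is no real obstacle here. The one point that needs care is that the enumeration is exhaustive, so that no valid sequence is missed or double counted. The structural count above (single $B$ versus an adjacent pair) settles this independently of the table. We should also note that types $k\ge 3$ cannot occur, because the gap is at most $1$; this is consistent with row $3$ of the table having no entries beyond $k=2$.
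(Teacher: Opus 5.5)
Your proposal is correct and matches the paper's proof: you enumerate the $8$ sequences in $\Sigma_3$, exclude $(R,R,R)$ and the two gap-$2$ sequences, and apply $\tau_3 = \mathrm{gap}+1$ to obtain the counts $[3,2]$, which you check against Table~\ref{tab:katugampola}. Your structural cross-check (a single $B$ versus an adjacent $BB$ pair) is the same observation the paper uses in Section~5 to derive $|V_n| = 2n-1$.
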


\begin{table}[h]
\centering
\caption{Complete enumeration for $n=3$.}
\label{tab:n3-enum}
\begin{tabular}{|c|c|c|c|c|c|c|c|}
\hline
Sequence & Has $B$? & $\mathrm{first}_B$ & $\mathrm{last}_B$ & gap & 
gap $\leq 1$? & $k=\mathrm{gap}+1$ & Valid? \\
\hline
$(R,R,R)$ & No & -- & -- & -- & -- & -- & No \\
$(R,R,B)$ & Yes & $3$ & $3$ & $0$ & Yes & $1$ & Yes \\
$(R,B,R)$ & Yes & $2$ & $2$ & $0$ & Yes & $1$ & Yes \\
$(R,B,B)$ & Yes & $2$ & $3$ & $1$ & Yes & $2$ & Yes \\
$(B,R,R)$ & Yes & $1$ & $1$ & $0$ & Yes & $1$ & Yes \\
$(B,R,B)$ & Yes & $1$ & $3$ & $2$ & No & $3$ & No \\
$(B,B,R)$ & Yes & $1$ & $2$ & $1$ & Yes & $2$ & Yes \\
$(B,B,B)$ & Yes & $1$ & $3$ & $2$ & No & $3$ & No \\
\hline
\end{tabular}
\end{table}

\begin{proof}
All $2^3 = 8$ binary sequences of length $3$ are enumerated in 
Table~\ref{tab:n3-enum}. The sequence $(R,R,R)$ contains no $B$ and 
is excluded. Among the remaining seven, $(B,R,B)$ and $(B,B,B)$ have 
gap $= 2 > 1$ and are excluded. The valid set is
\[
V_3 = \{(R,R,B),\,(R,B,R),\,(R,B,B),\,(B,R,R),\,(B,B,R)\}, \quad |V_3| = 5.
\]
Since $n=3$ is odd, we apply $\tau_3(\sigma) = \mathrm{gap}(\sigma) + 1$:

\medskip
\noindent\textbf{Sequences with gap $= 0$ (type $k=1$):}
$(R,R,B)$, $(R,B,R)$, $(B,R,R)$. Count: $3$.

\medskip
\noindent\textbf{Sequences with gap $= 1$ (type $k=2$):}
$(R,B,B)$, $(B,B,R)$. Count: $2$.

\medskip
Thus $[c_{3,1}^{(1/2)}, c_{3,2}^{(1/2)}] = [3, 2]$, matching 
Table~\ref{tab:katugampola} exactly.
\end{proof}

\begin{remark}
\label{rem:gap2-exclusion}
The excluded sequences $(B,R,B)$ and $(B,B,B)$ both have gap $= 2$, 
and under $\tau_3$ would receive type $k = 3$. Since 
$c_{3,3}^{(1/2)}$ does not appear in Katugampola's table for $n=3$, 
their exclusion is consistent with our model.
\end{remark}

\section{The Success of the Gap $\leq 1$ Structure}

Our main results establish that for $n \in \{1,2,3\}$, the coefficients 
$c_{n,k}^{(1/2)}$ can be interpreted as counting binary sequences with 
specific properties. The number of valid sequences follows an interesting 
pattern:
\[
|V_1| = 1, \quad |V_2| = 3, \quad |V_3| = 5.
\]
This sequence of consecutive odd numbers suggests an underlying 
combinatorial structure that warrants further investigation. Indeed, 
one can verify that $|V_n| = 2n - 1$ for all $n \geq 1$, since there 
are $n$ sequences with gap $= 0$ (exactly one $B$ at some position $i$) 
and $n-1$ sequences with gap $= 1$ (two consecutive $B$'s starting at 
some position $i \leq n-1$).

The type function also exhibits a clear parity-dependent pattern. For 
even $n$, we use the ``inverted'' formula $\tau_n(\sigma) = 2 - \mathrm{gap}(\sigma)$, 
while for odd $n$, we use the ``direct'' formula 
$\tau_n(\sigma) = \mathrm{gap}(\sigma) + 1$. This parity dependence, while 
empirically successful for $n \leq 3$, lacks a theoretical explanation 
and represents an intriguing open question.

\section{Structural Obstruction for $n \geq 4$}

\subsection{Computational Evidence}

While the gap $\leq 1$ model correctly enumerates $c_{n,k}^{(1/2)}$ 
for $n \in \{1,2,3\}$, computational investigation reveals that this 
structure does not extend to $n=4$.

\begin{table}[h]
\centering
\caption{Selected sequences for $n=4$ under the gap $\leq 1$ restriction.}
\label{tab:n4-partial}
\begin{tabular}{|c|c|c|c|c|c|}
\hline
Sequence & $\mathrm{first}_B$ & $\mathrm{last}_B$ & gap & 
gap $\leq 1$? & Type $k=2-$gap \\
\hline
$(R,R,R,B)$ & $4$ & $4$ & $0$ & Yes & $2$ \\
$(R,R,B,R)$ & $3$ & $3$ & $0$ & Yes & $2$ \\
$(R,B,R,R)$ & $2$ & $2$ & $0$ & Yes & $2$ \\
$(B,R,R,R)$ & $1$ & $1$ & $0$ & Yes & $2$ \\
$(B,B,R,R)$ & $1$ & $2$ & $1$ & Yes & $1$ \\
$(R,B,B,R)$ & $2$ & $3$ & $1$ & Yes & $1$ \\
$(R,R,B,B)$ & $3$ & $4$ & $1$ & Yes & $1$ \\
$(B,R,B,R)$ & $1$ & $3$ & $2$ & No & -- \\
$(B,R,R,B)$ & $1$ & $4$ & $3$ & No & -- \\
$(B,B,B,B)$ & $1$ & $4$ & $3$ & No & -- \\
\hline
\end{tabular}
\end{table}

Complete enumeration of all sequences with gap $\leq 1$ for $n=4$ 
yields $4$ sequences of type $k=2$ (gap $=0$) and $3$ sequences of 
type $k=1$ (gap $=1$), giving counts $[3, 4]$ for types $k=1,2$. 
However, Table~\ref{tab:katugampola} gives 
$[c_{4,1}^{(1/2)}, c_{4,2}^{(1/2)}, c_{4,3}^{(1/2)}] = [3, 12, 4]$. 
Not only do the counts disagree, but Katugampola's coefficients 
require three types while the gap $\leq 1$ structure produces only two.

\subsection{Type Count Limitation}

\begin{lemma}[Type Count Limitation]
\label{lem:typecount}
For any fixed $n$, the type $k = \tau_n(\sigma)$ takes at most $2$ 
distinct values over all $\sigma \in V_n$. That is,
\[
|\{k : k = \tau_n(\sigma) \text{ for some } \sigma \in V_n\}| \leq 2.
\]
\end{lemma}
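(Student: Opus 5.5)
The plan is to show that the type map $\tau_n$ factors through the gap function, and that the gap function takes only two values on $V_n$. Since $\tau_n$ is defined in Definition~\ref{def:type-function} as a function of $\mathrm{gap}(\sigma)$ alone (with $n$ fixed, so the parity case is fixed), we have $\tau_n = \phi_n \circ \mathrm{gap}$, where $\phi_n(g) = 2-g$ for even $n$ and $\phi_n(g) = g+1$ for odd $n$. Consequently, the image $\tau_n(V_n)$ equals $\phi_n(\mathrm{gap}(V_n))$, and its cardinality is at most $|\mathrm{gap}(V_n)|$.

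The key step is to bound the set of gap values. For $\sigma \in V_n$, the gap is well defined, because $\sigma$ contains a $B$. It is a nonnegative integer, since $\mathrm{last}_B(\sigma) \geq \mathrm{first}_B(\sigma)$ by Definition~\ref{def:position-functions}. Moreover, $\mathrm{gap}(\sigma) \leq 1$ by Definition~\ref{def:valid-sequences}. Hence $\mathrm{gap}(V_n) \subseteq \{0,1\}$, and so $|\tau_n(V_n)| \leq 2$.

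For concreteness, I would record the explicit images: $\tau_n(V_n) \subseteq \{1,2\}$ in both parity cases, since $\phi_n$ maps $\{0,1\}$ onto $\{2,1\}$ for even $n$ and onto $\{1,2\}$ for odd $n$. The observation in Section~5 shows that for $n \geq 2$ both gap values are attained, namely by a single $B$ and by the adjacent pair $BB$. The bound is therefore sharp for $n \geq 2$, while $n=1$ gives exactly one type. This sharpness is not needed for the lemma.

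There is no real obstacle here. The only point needing care is to note that $\tau_n$ depends on $\sigma$ only through its gap, and not, say, on the positions of the $B$'s. That dependence is immediate from the definition, so the argument is essentially a two-line pigeonhole on the range of the gap.
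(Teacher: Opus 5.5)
Your proof is correct and matches the paper's: both note that $\mathrm{gap}(\sigma) \in \{0,1\}$ on $V_n$, then observe that $\tau_n$ depends only on the gap and so takes values in $\{1,2\}$ for either parity of $n$. Your factorization $\tau_n = \phi_n \circ \mathrm{gap}$ and your sharpness remark are small additions that the lemma does not need.
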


\begin{proof}
Since $\sigma \in V_n$ requires $\mathrm{gap}(\sigma) \leq 1$ and 
gap is a non-negative integer, we have $\mathrm{gap}(\sigma) \in \{0, 1\}$.

If $n$ is even: $\tau_n(\sigma) = 2 - \mathrm{gap}(\sigma)$ takes 
values in $\{2-0, 2-1\} = \{2, 1\}$.

If $n$ is odd: $\tau_n(\sigma) = \mathrm{gap}(\sigma) + 1$ takes 
values in $\{0+1, 1+1\} = \{1, 2\}$.

In both cases, $k \in \{1, 2\}$, so at most $2$ distinct types arise.
\end{proof}

\subsection{Obstruction Theorem}

\begin{theorem}[Obstruction Theorem]
\label{thm:obstruction}
No combinatorial model based on binary sequences satisfying:
\begin{enumerate}
\item at least one $B$,
\item $\mathrm{gap}(\sigma) \leq 1$,
\item type assignment $k = 2 - \mathrm{gap}(\sigma)$ (even $n$) or 
      $k = \mathrm{gap}(\sigma) + 1$ (odd $n$),
\end{enumerate}
can correctly enumerate $c_{n,k}^{(1/2)}$ for all $k$ when $n \geq 4$.
\end{theorem}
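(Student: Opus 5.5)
The plan is to argue by contradiction, using the type count limitation of Lemma~\ref{lem:typecount} together with a fact about the support of row $n$ of Katugampola's triangle. Suppose that for some $n \geq 4$ the model described by conditions 1--3 correctly enumerates $c_{n,k}^{(1/2)}$ for all $k$. That is, $c_{n,k}^{(1/2)} = |\{\sigma \in V_n : \tau_n(\sigma) = k\}|$ for every $k$. By Lemma~\ref{lem:typecount}, the right-hand side vanishes for every $k \notin \{1,2\}$. It therefore suffices to exhibit, for each $n \geq 4$, some index $k \geq 3$ with $c_{n,k}^{(1/2)} \neq 0$; the correct enumeration then fails at that $k$.

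The key step, and the main obstacle, is establishing this positivity for \emph{all} $n \geq 4$, not just the rows listed in Table~\ref{tab:katugampola}. For $4 \leq n \leq 9$, I would simply read off the table: $c_{4,3}^{(1/2)} = 4$, $c_{5,3}^{(1/2)} = 4$, $c_{6,3}^{(1/2)} = 60$, and so on, all positive.

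For general $n$, I would use the structure of the rows described in the introduction. Row $n$ has length $\lfloor n/2 \rfloor + 1$, so it has at least three entries once $n \geq 4$. Its entries are positive integers; the last one, for instance, is $2^{\lfloor n/2 \rfloor}$, as the pattern $4,4,8,8,16,16$ suggests. I would first try to justify this from the explicit form of the array. The odd rows correspond to generalized Laguerre polynomial coefficients, which are products of positive factors; the even rows have the binomial-type closed form visible in the table, e.g.\ $15 = 15\binom{?}{}$-style products. Both give $c_{n,k}^{(1/2)} > 0$ for $1 \leq k \leq \lfloor n/2 \rfloor + 1$.

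If a closed form is not available in citable form, I would fall back on the generating recurrence underlying A223168 and prove $c_{n,3}^{(1/2)} > 0$ by induction on $n$. A recurrence with nonnegative coefficients that preserves positivity once a row has a positive third entry would suffice, with the base case $c_{4,3}^{(1/2)} = 4$.

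Finally, I would also note the stronger count mismatch as a consistency check. Since $|V_n| = 2n-1$ by Section~5, the row sums of the model are linear in $n$, while the row sums of the table grow superexponentially. This gives an independent failure, although the type-count argument above already suffices.
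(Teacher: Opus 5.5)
Your proposal follows the paper's proof: Lemma~\ref{lem:typecount} confines the model's types to $k\in\{1,2\}$, and a nonzero entry $c_{n,k}^{(1/2)}$ with $k\ge 3$ in every row $n\ge 4$ yields the contradiction. The paper likewise only displays $n=4,5,6$ and asserts the general case, and your inductive fallback does close it: normal-ordering $(x^{1/2}\,d/dx)^n$ and rescaling row $n$ by $2^{\lfloor n/2\rfloor}$ gives $c_{n+1,k}^{(1/2)}=c_{n,k}^{(1/2)}+k\,c_{n,k+1}^{(1/2)}$ for even $n$ and $c_{n+1,k}^{(1/2)}=(2k-1)\,c_{n,k}^{(1/2)}+2\,c_{n,k-1}^{(1/2)}$ for odd $n$ (these reproduce every row of Table~\ref{tab:katugampola}), so $c_{n,3}^{(1/2)}$ is nondecreasing and $c_{n,3}^{(1/2)}\ge c_{4,3}^{(1/2)}=4$ for all $n\ge 4$.
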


\begin{proof}
\hfill\break
\noindent\textbf{Step 1: Model limitation.}   
By Lemma~\ref{lem:typecount}, any model satisfying conditions~(1)--(3) 
produces at most $2$ distinct types, namely $k \in \{1, 2\}$, for any $n$.

\medskip\noindent\textbf{Step 2: Required types for $n \geq 4$.} 
From Katugampola~\cite{Katugampola2015} and~\cite{OEIS_A223168}:
\begin{align*}
n = 4: &\quad [c_{4,1}^{(1/2)}, c_{4,2}^{(1/2)}, c_{4,3}^{(1/2)}] 
        = [3, 12, 4], \\
n = 5: &\quad [c_{5,1}^{(1/2)}, c_{5,2}^{(1/2)}, c_{5,3}^{(1/2)}] 
        = [15, 20, 4], \\
n = 6: &\quad [c_{6,1}^{(1/2)}, c_{6,2}^{(1/2)}, c_{6,3}^{(1/2)}, 
               c_{6,4}^{(1/2)}] = [15, 90, 60, 8].
\end{align*}
For $n = 4$, the row has three nonzero entries, so three distinct 
types are required. In general, for $n \geq 4$ at least three 
types are needed.

\medskip\noindent\textbf{Step 3: Contradiction.} 
For $n \geq 4$, the model provides at most $2$ types but at least 
$3$ types are needed. This mismatch is fundamental and cannot be 
resolved within the gap $\leq 1$ framework.
\end{proof}

\begin{corollary}
\label{cor:boundary}
The gap $\leq 1$ binary sequence model correctly enumerates 
$c_{n,k}^{(1/2)}$ if and only if $n \in \{1, 2, 3\}$.
\end{corollary}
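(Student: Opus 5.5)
The corollary is a biconditional, so I will prove the two directions separately using results already established.

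For the ``if'' direction, take $n\in\{1,2,3\}$ and quote Theorems~\ref{thm:n1}, \ref{thm:n2}, and~\ref{thm:n3}. Together they show that for every $k$ with $c_{n,k}^{(1/2)}$ listed in Table~\ref{tab:katugampola}, the number of $\sigma\in V_n$ with $\tau_n(\sigma)=k$ equals $c_{n,k}^{(1/2)}$.

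To make ``correctly enumerates'' a genuine equality of rows, I also have to check that the model produces no type outside the table's support. By Lemma~\ref{lem:typecount}, the types lie in $\{1,2\}$.
\begin{itemize}
\item For $n=1$, the only valid sequence $(B)$ has type $1$.
\item For $n=2,3$, both types $1$ and $2$ occur, and both correspond to listed entries.
\end{itemize}
So the full type-distribution of $V_n$ coincides with row $n$ of the table.

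For the ``only if'' direction, suppose $n\ge 4$ and invoke Theorem~\ref{thm:obstruction}. Its Step~1 is simply Lemma~\ref{lem:typecount}. The load-bearing input is Step~2: that row $n$ has at least three nonzero entries for every $n\ge4$.

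The theorem's proof only displays rows $4,5,6$, so this is the step I expect to be the main obstacle. My plan is to cite the general description of A223168~\cite{OEIS_A223168} from~\cite{Katugampola2015}, namely that row $n$ has length $\lfloor n/2\rfloor+1$ with all entries positive. That gives at least $3$ entries once $n\ge4$.

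If a fully self-contained argument is wanted, I would instead prove positivity of $c_{n,1}^{(1/2)}, c_{n,2}^{(1/2)}, c_{n,3}^{(1/2)}$ for $n\ge4$ directly, from the recurrence defining the triangle. The table suggests the recurrence preserves positivity and adds a new column every two rows. The key fact to verify is that $c_{n,3}^{(1/2)}>0$ first occurs at $n=4$ and persists thereafter.

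Once that is in place, the contradiction is immediate. At most two types can be matched against at least three required nonzero counts, so the count for some $k\ge3$ would be $0\ne c_{n,k}^{(1/2)}$. Hence the model fails for every $n\ge4$, which completes the equivalence.
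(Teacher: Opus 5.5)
Your proposal is correct and follows the paper's proof. The ``if'' direction is read off from Theorems~\ref{thm:n1}, \ref{thm:n2}, \ref{thm:n3}, and the ``only if'' direction is Theorem~\ref{thm:obstruction}, i.e.\ Lemma~\ref{lem:typecount} set against a third nonzero entry in every row $n\ge 4$. The point you flag is fair, since the paper's Step~2 displays only rows $4,5,6$ and asserts the rest, and your fallback works: the rows are $2^{\lfloor n/2\rfloor}$ times the coefficients of $(x^{1/2}\,d/dx)^n$, whose recurrence $a_{n+1,j}=(j-n/2)\,a_{n,j}+a_{n,j-1}$ has nonnegative multipliers on the support $\lceil n/2\rceil\le j\le n$, which gives $\lfloor n/2\rfloor+1\ge 3$ positive entries for $n\ge 4$.
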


\begin{proof}
The forward direction follows from Theorems~\ref{thm:n1}, \ref{thm:n2}, 
and~\ref{thm:n3}. The reverse direction follows from 
Theorem~\ref{thm:obstruction}.
\end{proof}

\section{Conclusion}

This paper establishes the first combinatorial interpretation of 
Katugampola's generalized Stirling numbers $c_{n,k}^{(1/2)}$ arising 
from fractional differential operators. Our investigation yields both 
a positive result---a complete interpretation for small $n$---and a 
negative result---a rigorous explanation of why this interpretation 
cannot extend.

\textbf{Main contributions.} For $n \in \{1, 2, 3\}$, the coefficients 
$c_{n,k}^{(1/2)}$ count binary sequences $\sigma \in \{R, B\}^n$ 
satisfying two simple constraints: (1) at least one $B$ symbol, and 
(2) $\mathrm{gap}(\sigma) \leq 1$. The type assignment follows a 
parity-dependent formula: $k = 2 - \mathrm{gap}(\sigma)$ for even $n$ 
and $k = \mathrm{gap}(\sigma) + 1$ for odd $n$. Through 
Theorem~\ref{thm:obstruction}, we have also proven that \emph{no} 
model based on the gap $\leq 1$ constraint can correctly enumerate 
$c_{n,k}^{(1/2)}$ for $n \geq 4$, since such models produce at most 
two distinct types per row while Katugampola's sequence requires at 
least three for all $n \geq 4$.

\textbf{Theoretical significance.} Corollary~\ref{cor:boundary} 
precisely characterizes the boundary of our model: the gap $\leq 1$ 
interpretation works if and only if $n \in \{1, 2, 3\}$. Any 
successful extension to $n \geq 4$ must either abandon the gap $\leq 1$ 
restriction, modify the type assignment mechanism, or introduce 
fundamentally different combinatorial structures.

\textbf{The sequence $1, 3, 5$.} The identity $|V_n| = 2n-1$ shows 
that the total count of valid sequences forms the sequence of odd 
numbers. Whether this pattern extends to other fractional orders $r$ 
or admits a bijective explanation remains an open question.

\subsection{Future Directions}

The primary open problem is to find a combinatorial interpretation of 
$c_{n,k}^{(1/2)}$ for $n \geq 4$. Several promising directions remain:

\textbf{Modified gap thresholds.} A gap $\leq k-1$ threshold for 
type $k$ sequences, or a gap $\leq \lfloor n/2 \rfloor$ restriction, 
might yield the correct counts. Our obstruction theorem shows any such 
modification must produce at least three types for $n \geq 4$.

\textbf{Additional structure.} Constraints on the number of $B$ 
symbols, their spacing pattern, or the structure of $R$-runs between 
$B$ symbols could provide the additional degrees of freedom needed to 
produce more types.

\textbf{Weighted enumeration.} A weighted count where different valid 
sequences contribute different values to $c_{n,k}^{(1/2)}$ would 
represent a significant departure from our simple counting model but 
remains compatible with our obstruction result.

\textbf{Other fractional orders.} Katugampola's framework generates 
sequences for all rational orders $r \in \{1/2, 1/3, 1/4, 1/5, \ldots\}$ 
(OEIS A223168--A223172~\cite{OEIS_A223168, OEIS_A223169}). For $r = 1/3$, does the 
natural alphabet expand to ternary sequences $\{R, B_1, B_2\}$?

\textbf{Connection to Laguerre polynomials.} Katugampola notes that 
the odd rows of the $r = 1/2$ triangle relate to generalized Laguerre 
polynomials $L_n^{(1/2)}(x)$~\cite{Szego1975}. Tools from orthogonal polynomial theory, 
including continued fractions and three-term recurrences, may be relevant.

\textbf{Alternative frameworks.} Other possibilities include set 
partitions with restricted block structures, lattice paths with step 
constraints, permutations with pattern restrictions, Dyck or Motzkin 
paths with weights, and compositions of integers with gap-like 
constraints. Each naturally generates triangular arrays of integers.

Despite the structural obstruction at $n = 4$, our work represents 
genuine progress on a previously unstudied problem. We have provided 
the first combinatorial interpretation of any portion of Katugampola's 
generalized Stirling number triangles, established a precise boundary 
theorem characterizing where this interpretation succeeds and fails, 
and identified concrete directions for extending to larger $n$.


\end{document}